\newtheorem{theorem}{Theorem}
\newtheorem{lemma}[theorem]{Lemma}
\newtheorem{corollary}[theorem]{Corollary}
\newtheorem{definition}{Definition}
\newtheorem{example}{Example}
\newtheorem{remark}{Remark}
\definecolor{cof}{RGB}{219,144,71}
\definecolor{pur}{RGB}{186,146,162}
\definecolor{greeo}{RGB}{91,173,69}
\definecolor{greet}{RGB}{52,111,72}
\newcommand{\bal}[1] {\ensuremath{\left(\begin{array}{#1}}}
\newcommand{\ear} {\ensuremath{\end{array}\right)}}
\newcommand{\bals}[1] {\ensuremath{\left[\begin{array}{#1}}} 
\newcommand{\ears} {\ensuremath{\end{array} \right] }} 
\DeclareMathOperator{\spa}{span}
\DeclareMathOperator{\diag}{diag}
\DeclareMathOperator{\vect}{vec}
\newcommand{\one} {\ensuremath{\mathds{1} }} 
\let\leq\leqslant
\let\geq\geqslant
\newcommand{\calA}{\ensuremath{\mathcal{A}}}
\newcommand{\calB}{\ensuremath{\mathcal{B}}}
\newcommand{\calI}{\ensuremath{\mathcal{I}}}
\newcommand{\calT}{\ensuremath{\mathcal{T}}}
\newcommand{\bmat}{\begin{matrix}}
\newcommand{\emat}{\end{matrix}}
\newcommand{\bbm}{\begin{bmatrix}}
\newcommand{\ebm}{\end{bmatrix}}
\newcommand{\bpm}{\begin{pmatrix}}
\newcommand{\epm}{\end{pmatrix}}
\newcommand{\bse}{\begin{subequations}}
\newcommand{\ese}{\end{subequations}}
\newcommand{\beq}{\begin{equation}}
\newcommand{\eeq}{\end{equation}}
\newcommand{\ben}{\begin{enumerate}}
\newcommand{\een}{\end{enumerate}}
\newcommand{\beni}{\renewcommand{\labelenumi}{\roman{enumi}.}
\renewcommand{\theenumi}{\roman{enumi}}\begin{enumerate}}
\newcommand{\eeni}{\end{enumerate}\renewcommand{\labelenumi}{\arabic{enumi}.}
\renewcommand{\theenumi}{\arabic{enumi}}}
\newcommand{\bena}{\renewcommand{\labelenumi}{\alpha{enumi}.}
\renewcommand{\theenumi}{\alpha{enumi}}\begin{enumerate}}
\newcommand{\eena}{\end{enumerate}\renewcommand{\labelenumi}{\arabic{enumi}.}
\renewcommand{\theenumi}{\arabic{enumi}}}
\newcommand{\bit}{\begin{itemize}}
\newcommand{\eit}{\end{itemize}}
\newcommand{\R}{\ensuremath{\mathbb R}}
\newcommand{\N}{\ensuremath{\mathbb N}}
\newcommand{\E}{\ensuremath{\mathbb E}}
\newcommand{\Q}{\ensuremath{\mathbb Q}}
\title{\LARGE \bf
On the modeling of neural cognition for social network applications}
\author{Jieqiang Wei, Junfeng Wu, Marco Molinari, Vladimir Cvetkovic and Karl H. Johansson 
\thanks{*This work is supported by Knut and Alice Wallenberg Foundation, Swedish Research Council, and Swedish Foundation for Strategic Research.}
\thanks{J. Wei, J. Wu and K.H. Johansson are with the ACCESS Linnaeus Centre, School of Electrical Engineering. 
V. Cvetkovic is with School of Architecture and the built environment. 
M. Molinari is with Department of Energy Technology.                     
                      KTH Royal Institute of Technology,
                    SE-100 44 Stockholm, Sweden. Emails:
        {\tt\small \{jieqiang, junfengw, marcomo, vdc, kallej\}@kth.se}}
}
\begin{document}

\maketitle
\thispagestyle{plain}
\pagestyle{plain}

\begin{abstract}\label{s:Abstract}
In this paper, we study neural cognition in social network. A stochastic model is introduced and shown to incorporate two well-known models in Pavlovian conditioning and social networks as special case, namely Rescorla-Wagner model and Friedkin-Johnsen model. The interpretation and comparison of these model are discussed. We consider two cases when the disturbance is independent identical distributed for all time and when the distribution of the random variable evolves according to a markov chain. We show that the systems for both cases are mean square stable and the expectation of the states converges to consensus.  

\end{abstract}

\section{Introduction}\label{s:Introduction}

There is a broad interest in studying social networks including opinions in a population, e.g., \cite{Degroot_74,Xia2011}. Incorporating initial opinions and weighting them relative to the network impact was an important extension for understanding social influence networks and opinion change \cite{FriedkinJohnsen1999,friedkin2011social}; it was shown how initial opinions may persist whereby resulting in consensus or disagreement. Recent development considers potentially correlated opinions on several different issues and shows how this correlation may affect convergence \cite{Friedkin321,Tempo2015}. 

The social network analyses with focus on consensus building preconceive a cognitive process that deals with how subjects integrate conflicting influential opinions \cite{FriedkinJohnsen1999}. Traditionally, the cognitive and emotional processes are perceived as separate, originating from “affective” or “cognitive” brain regions. Recent evidence, however, supports a more integrated view of cognition and emotion, the physiological basis being the high connectivity areas of the barin (“hubs”) \cite{Pessoa08}. 
Hence, pure cognition for opinion building within social networks cannot be assumed a priori. 

Associative learning through conditioning is central to understanding many key aspects of emotions, such as fear  \cite{Field2006,Hermans2006,Kong2014, Mineka2002,Mineka2008}. The Rescorla-Wagner model \cite{RW1972} has been found very useful for describing animal as well as human conditioning in a variety of contexts \cite{Pearce1980,Siegel1996,Culver2015}. Recently, Epstein \cite{epstein2014agent_zero} used the Rescorla-Wagner model to study social behaviour. The central concept in the work of \cite{epstein2014agent_zero} is the notion of “dispostion” composed of an emotional part (by conditioning) and a rational part (by cognition); action follows when disposition is greater than a given threshold.   

The main contribution of this paper is to integrate the two approaches of social network opinion analysis following the Friedkin-Johnsen model \cite{FriedkinJohnsen1999} and conditioning using a generalised form of the Rescorla-Wagner model that accounts for interpersonal influences on emotions. We model the opinion as a dynamic variable in the sense of \cite{Tempo2015} for a single issue, however, we add conditioning to the dynamic process. The associative strength is loosely interpreted  as emotional disposition \cite{epstein2014agent_zero}, and is assumed to directly influence the opinion on a given issue. In other words, we assume proportionality between an opinion and emotional disposition. Generic properties of the integrated social network-conditioning model are presented, opening new possibilities for quantifying social dynamics that have yet to be explored. Particularly, it is shown that the mean square stability of our model depends on the matrices given by the proportionality.

The structure of the paper is as follows. In Section~\ref{s:Preliminaries}, we introduce some terminologies and notations. In Section~\ref{s:basic_model}, we present our model with comparison to existing works. The main analytical results are presented in Section~\ref{s:main} mainly with sufficient and necessary conditions for mean square stability of our model.
Then the simulation and conclusion follows in Sections ~\ref{s:simulation} and \ref{s:conclusion} respectively.


\section{Preliminaries}\label{s:Preliminaries}


Given a square matrix $A=(a_{ij})_{i,j=1}^n$, let $\rho(A)$ be its spectral radius. The matrix $A$ is \emph{Schur stable} if $\rho(A)<1$. The matrix is \emph{row stochastic} if $a_{ij}\geq 0$ and $\sum_{j=1}^n a_{ij}=1, \forall i.$ The terminologies about Markov chains are kept consistent with \cite{norris1998markov}.

Consider a measurable space $(\Omega, \calA)$ with a nonempty set $\Omega$ and a $\sigma-$algebra $\calA\subset 2^{\Omega}$. For any $A\in\calA$, we define the indicator function $\mathds{1}_{A}:\Omega\mapsto\{0,1\}$ as 
\begin{align}
\mathds{1}_{A}(\omega)=
\begin{cases}
1 & \textrm{ if } \omega\in A,\\
0 & \textrm{ otherwise }.
\end{cases}
\end{align}

For a linear stochastic difference equation 
\begin{align}\label{e:linear_sto}
x(k+1) = Hx(k)+\zeta(k), k=1,2,\ldots
\end{align}
with $x(k)\in\R^n$, initial condition $x(0)$, $H\in\R^{n\times n}$ a constant matrix and $\zeta(k)$ a random vector, we define the mean square stability of the system \eqref{e:linear_sto} as follows.

\begin{definition}\cite{Costa2005}
The linear system \eqref{e:linear_sto} is mean square stable if for any initial condition $x(0)$, there exists $\mu\in\R^n$ and matrix $\Q$ (independent of $x(0)$) such that 
\begin{enumerate}
\item $\lim_{k\rightarrow\infty}\|\E(x(k))-\mu\|=0$,
\item $\lim_{k\rightarrow\infty}\|\E(x(k)x(k)^\top)-\Q\|=0$.
\end{enumerate}
\end{definition}


With $\R_-,\R_+, \R_{\geq 0}$ and $\R_{\leqslant 0}$ we denote the sets of negative, positive, non-negative, non-positive  real numbers, respectively. A positive definite, positive semi-definite matrix $M$ is denoted as $M>0$, $M\geq 0$, respectively. 
The symbol $\one_n$ represents a $n$-dimensional column vector with each entry being $1$.  We will drop the subscript $n$ when no confusion is possible. For a vector $\eta\in\R^n$, $\diag(\eta)$ is a diagonal matrix with the $i$th diagonal element equal to $\eta_i$. For a matrix $X=[x_1,x_2,\ldots,x_n]\in\R^{m\times n}$, $\vect(X)$ is the vectorization of $X$, i.e., $\vect(X)=[x^\top_1,x^\top_2,\ldots,x^\top_n]^\top.$ We use $\otimes$ to denote Kronecker product. For vectorization and Kronecker product, the following properties are frequently used in this paper: \textit{i)} $\vect(ABC)=(C^\top\otimes A)\vect(B)$; \textit{ii)} $(A\otimes B)(C\otimes D)=(AC)\otimes(BD)$, where $A,B,C$ and $D$ are matrices of compatible dimensions. For matrix $X$, its $1-$norm is the maximum absolute column sum, i.e., $\|X\|_1=\max_{1\leq j\leq n}\sum_{i=1}^m |X_{ij}|$.

\section{Models}\label{s:basic_model}

\subsection{Rescorla-Wagner model}

One of the most well-known models in \emph{Pavlovian} theory of associative learning, called \emph{Rescorla-Wagner model}, was proposed in \cite{RW1972}. In the classic Rescorla-Wagner model, the conditional stimulus has an associative value $x\in\R$, supposed to be proportional to the amplitude of the conditional response or to the proportion of conditional response triggered by the conditional stimulus. A typical Pavlovian conditioning session is a succession of several trials. Each trial is composed of the presentation of the conditional stimulus followed by the presentation of the unconditional stimulus. On each trial $k$, the associative value of the conditional stimulus are updated according to the following equation 
\begin{align*}
x(k+1) = x(k)+ \alpha \Big(r(k)- x(k) \Big), 
\end{align*}
where $r(k)\in\R$ is the intensity of the unconditional stimulus on that trial and $\alpha$ is a learning parameter, $x(k)\in\R$ is the associative strength between the conditional stimulus and the unconditional stimulus.

In more general Rescorla-Wagner models, more conditional stimulus can be incorporated. Each conditional stimulus has an associative value $x_i$, which is the associative strength of the $i$th conditional stimulus and the unconditional stimulus, namely some degree to which the conditional stimulus alone elicits the unconditional response. The associative value of all the conditional stimuluses are updated according to the following equation
\begin{align}\label{e:RW}
x_i(k+1) = x_i(k)+ \alpha \Big(r(k)-\sum_{i=1}^n x_i(k) \Big), i=1,\ldots,n,
\end{align}
where $n$ is the number of conditional stimulus on that trial.     
        
Rescorla-Wagner model is especially successful in explaining the block phenomenon in Pavlovian conditioning with experimental supports \cite{Siegel1996}.

\subsection{Epstein's model}

One of the major contributions of Epstein \cite{epstein2014agent_zero} is to establish the connection between a Pavlovian conditioning model, i.e., Rescorla-Wagner model, to neural cognition and human behavior in social networks. One methodology of describing human behavior is proposed by separating the human psychology into irrational, rational and social parts. The irrational component evolve according the Rescorla-Wagner model
\begin{equation}
x_i(k+1) = x_i(k)+\alpha(\lambda-x_i(k)),
\end{equation} 
where $\lambda$ is a random binary variable, which takes value one for emotion acquisition, and zero for emotion extinction. Then in the methodology proposed by \cite{epstein2014agent_zero}, human behavior (or action) depends on whether the summation of irrational and rational components of each person is larger than a given threshold. At each time step, everyone communicate with each others about the irrational part through a network.  One major "drawbacks" of this methodology is that the irrational part of each person can not be affected by the others dynamically. That motivates our generalization in this paper.

\subsection{Our model}

Consider a society composed by $n$ agents denoted $\calI=\{1,\ldots,n\}.$ We focus on the evolution of irrational component of the agents. We generalize Epstein's model by allowing the irrational component of each agent be affected by others dynamically, namely,
\begin{align}\label{e:our model}
x(k+1) = B x(k) + A (\lambda(k)-W x(k)),
\end{align}
where $x(k)\in\R^n$, $B$ and $W$ are row-stochastic matrices, $A$ is a diagonal matrix satisfying $0\leq A \leq I$, $\lambda$ is a random vector with each components being zero or one. The initial condition is set to be $x(0)=x_0\in\R^n$.

Note that the model \eqref{e:our model} contains \eqref{e:RW} as a special case by appropriate choice of matrices $A,B$ and $W$. Furthermore, by setting $A=I-\Lambda$, $B=W$ and $\lambda(k)$ being deterministic and identical for all $k$, our model corresponds to the Friedkin-Johnsen model \cite{friedkin2011social} given as
\begin{align}\label{e:FJ}
x(k+1) = \Lambda W x(k)+ (I-\Lambda)u, x(0)=u,
\end{align}
where $W$ is a row stochastic matrix, $\Lambda$ is a diagonal matrix satisfying $0\leq \Lambda \leq I$. This model is used to describe the dynamic of the (scalar) opinion of a community of $n$ social agents about one issue and $u$ is the initial opinion.

\section{Analytical properties}\label{s:main}

In this section, we study the stability of the state $x$ of the system \eqref{e:our model}. As the vector $\lambda$ introduce the randomness to $x$, one can expect the convergence of the system \eqref{e:our model} to depend on the distribution of $\lambda$. Here we consider two cases, namely  $\lambda(k)$ are independent and identically distributed (i.i.d.) for all $k$ or $\lambda$ obeys a Markov chain, respectively.

\subsection{I.i.d. $\lambda(k)$}\label{ss:iid}

In this subsection, we assume that the sequence of $\lambda(k)$ is i.i.d. . More precisely, $\forall k,$
\begin{equation}\label{e:iid}
\lambda_i(k) = \begin{cases}
0 & \textrm{ with probability } p_i,\\
1 & \textrm{ with probability } 1-p_i,
\end{cases}\quad \forall i\in\calI.
\end{equation}
Let us denote $p=[p_1,\ldots,p_n]^\top$. Then we have the expectation of $\lambda(k)=\mathds{1}-p.$ Moreover, suppose the components of $\lambda$ are correlated, and the covariance matrix of $\lambda(k)$ is $\Sigma$ which is positive definite.

In the first result, we characterize the probability of system \eqref{e:our model} having finite limit. In fact, we provide the sufficient and necessary condition for system \eqref{e:our model} converging to a finite limit almost surely.

\begin{theorem}
Consider the system \eqref{e:our model} with the random vector $\lambda$ satisfying \eqref{e:iid}, then the random variable $x(k)$ converge almost surely to a finite limit if and only if $B-AW$ is Schur stable. Furthermore, if $B-AW$ is Schur stable, the distribution of $\sum_{k=0}^\infty(B-AW)^kA\lambda(k)$ is the unique invariant distribution for the Markov chain $x(k)$.
\end{theorem}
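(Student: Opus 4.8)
The plan is to put \eqref{e:our model} into the standard affine stochastic form and study the resulting linear recursion together with its associated series. Grouping the terms that multiply $x(k)$, the recursion reads
\begin{equation*}
x(k+1) = (B-AW)x(k) + A\lambda(k),
\end{equation*}
so with $H := B-AW$ it is a constant-coefficient linear recursion driven by the bounded i.i.d. input $A\lambda(k)$. Unrolling from $x(0)$ gives
\begin{equation*}
x(k) = H^{k}x(0) + \sum_{j=0}^{k-1} H^{\,k-1-j}A\lambda(j),
\end{equation*}
and, since the $\lambda(j)$ are i.i.d., reversing the index of summation shows that $x(k)$ has the same law as $H^{k}x(0) + \sum_{i=0}^{k-1} H^{i}A\lambda(i)$. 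This reversed (forward) series is the object that converges pathwise and is the bridge between the two claims of the theorem.

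For sufficiency, suppose $\rho(H)<1$. Then $\norm{H^{i}}\leq C\beta^{i}$ for some $C>0$, $\beta\in(0,1)$, and since $\lambda(i)\in\{0,1\}^{n}$ and $0\leq A\leq I$ we have $\norm{H^{i}A\lambda(i)}\leq C\sqrt{n}\,\beta^{i}$. Hence $\sum_{i=0}^{\infty}H^{i}A\lambda(i)$ converges absolutely and almost surely to a finite limit $Y$, while $H^{k}x(0)\to 0$; this yields the finite limit. For necessity I would argue by contraposition: if $\rho(H)\geq 1$, choose a left eigenvector $v$ with eigenvalue $\mu$, $|\mu|\geq 1$, and project onto $v$. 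The scalar sequence $v^{\T}x(k)$ is an affine recursion with gain $\mu$ whose independent increments $v^{\T}A\lambda(j)$ enter with non-decaying weights $|\mu|^{\,k-1-j}$; using positive definiteness of $\Sigma$ to guarantee $v^{\T}A\Sigma A^{\T}\bar v>0$ whenever $v^{\T}A\neq 0$, the variance of the driven part grows without bound (it is at least linear in $k$ when $|\mu|=1$), so $v^{\T}x(k)$ cannot converge to a finite limit. The residual case $v^{\T}A=0$ is handled by the deterministic part, where a suitable $x(0)$ makes $H^{k}x(0)$ fail to converge.

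Finally, to identify the limit law, let $\pi$ be the distribution of $Y=\sum_{k=0}^{\infty}H^{k}A\lambda(k)$. I would verify invariance by a shift computation: if $x(0)\sim\pi$ is drawn independently of $\lambda(0)$, then $Hx(0)+A\lambda(0)=A\lambda(0)+\sum_{k\geq 1}H^{k}A\lambda(k-1)$, which after relabeling the i.i.d. noise again has law $\pi$, so $\pi$ is invariant for the Markov chain $x(k)$. Uniqueness follows since the sufficiency estimate shows $x(k)$ converges in law to $\pi$ from every initial condition, leaving no room for a second invariant law. The step I expect to be the main obstacle is the necessity direction, because it must combine the spectral condition with the precise excitation geometry of the noise (the roles of $A$ and $\Sigma$), and because $x(k)$ is a backward partial sum rather than a partial sum of a fixed series; making the almost-sure statement airtight therefore requires routing the argument carefully through the distributional identity above rather than quoting convergence of the forward series directly.
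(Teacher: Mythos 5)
Your route is genuinely different from the paper's. The paper's entire proof consists of checking the moment condition $\E\big(\log^+\norm{\lambda(k)}\big)<\infty$ and the contraction condition $\rho(B-AW)<1$, and then invoking Theorem~2.1 of Diaconis and Freedman on iterated random functions; you instead unroll the affine recursion $x(k+1)=(B-AW)x(k)+A\lambda(k)$ by hand and reconstruct the content of that citation: the time-reversal identity between the backward partial sums appearing in $x(k)$ and the forward series $\sum_{i}H^{i}A\lambda(i)$, geometric a.s.\ convergence of the forward series under $\rho(H)<1$, and the shift computation identifying its law as the unique invariant distribution. The sufficiency and invariance parts are correct and self-contained, and the subtlety you flag at the end is real: what converges almost surely is the forward series (equivalently the backward iterates), while the chain $x(k)$ itself converges only in distribution when the noise is nondegenerate. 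The paper inherits this imprecision from its one-line citation, so your explicit treatment is if anything more careful on this point.

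The genuine gap is in your necessity direction. Projecting onto a left eigenvector $v$ with eigenvalue $\mu$, $|\mu|\geq 1$, and using $\Sigma>0$ to force unbounded variance works only when $v^{\top}A\neq 0$. In the residual case $v^{\top}A=0$ you assert that a suitable $x(0)$ makes $H^{k}x(0)$ fail to converge, but this is not always achievable: take $A=0$ and $B$ a primitive row-stochastic matrix, so that $H=B$, $\rho(H)=1$, yet $B^{k}x(0)$ converges for every $x(0)$. Thus the contrapositive cannot be closed without an additional nondegeneracy hypothesis such as $A>0$ (which the paper imposes only in its second theorem). This is not a defect you introduced --- the statement's ``only if'' is false in that degenerate case, and the paper's appeal to Diaconis--Freedman quietly needs the same nondegeneracy --- but to make your sketch airtight you must either add that hypothesis or restrict the necessity claim accordingly.
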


\begin{proof}
First, by the condition \eqref{e:iid}, we have that 
\begin{align*}
\E \Big( \log^+\|\lambda(k)\| \Big)<\infty.
\end{align*}
Furthermore, it can be seen that $\log\|B-AW\|<0$ if and only if $\rho(B-AW)<1$, namely $B-AW$ is Schur stable. Then by using Theorem 2.1 in \cite{Diaconis1999}, the conclusion follows.
\end{proof}

The previous result presents the condition which guarantee the sequence \eqref{e:our model} has a finite limit. Next, we shall show the sufficient condition to guarantee the mean square stability of system \eqref{e:our model}.

\begin{theorem}\label{thm:main2}
Suppose the diagonal matrix $A$ satisfies $0<A<I$. Then the Markov chain $x(k)$ defined in \eqref{e:our model} is mean square stable if and only if $B-AW$ is Schur stable. Moreover, if $B-AW$ is Schur stable, the expectation of $x(k)$ converges to $(I-B+AW)^{-1}A(\mathds{1}-p)$.
\end{theorem}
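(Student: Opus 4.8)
The plan is to separate the first and second moments and analyze each through the affine recursion obtained by rewriting \eqref{e:our model} as
\begin{equation*}
x(k+1) = (B-AW)\,x(k) + A\lambda(k),
\end{equation*}
abbreviating $H := B-AW$. Since $x(k)$ is a function of $\lambda(0),\ldots,\lambda(k-1)$ only, the i.i.d. assumption \eqref{e:iid} makes $\lambda(k)$ independent of $x(k)$. Taking expectations gives the deterministic recursion $m(k+1)=Hm(k)+A(\mathds{1}-p)$ for $m(k):=\E(x(k))$, while subtracting the mean and using independence (the cross terms vanish) gives a recursion for the covariance $V(k):=\E(x(k)x(k)^\top)-m(k)m(k)^\top$ that decouples from the mean:
\begin{equation*}
V(k+1) = H\,V(k)\,H^\top + A\Sigma A .
\end{equation*}
The structural facts I would exploit are that $H$ governs the first moment, that $H\otimes H$ governs the second moment (so its spectral radius is $\rho(H)^2$), and that the forcing term $A\Sigma A$ is positive definite because $\Sigma>0$ and $0<A<I$.

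For sufficiency, assume $H$ is Schur stable. Then $\rho(H)<1$ makes $I-H$ invertible, so $m(k)\to (I-H)^{-1}A(\mathds{1}-p)=(I-B+AW)^{-1}A(\mathds{1}-p)$, which is the claimed limit and is independent of $x(0)$ since the $H^k$ term decays. Vectorizing the covariance recursion via $\vect(HVH^\top)=(H\otimes H)\vect(V)$ turns it into an affine iteration with system matrix $H\otimes H$; since $\rho(H\otimes H)=\rho(H)^2<1$, the matrix $I-H\otimes H$ is invertible and $V(k)$ converges to the unique solution $V^\star$ of the Stein equation $V^\star=HV^\star H^\top+A\Sigma A$. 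Adding back $m(k)m(k)^\top$ then yields $\E(x(k)x(k)^\top)\to V^\star+\mu\mu^\top=:\Q$, so both conditions in the definition of mean square stability hold.

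For necessity, assume the system is mean square stable, so $\E(x(k)x(k)^\top)$ is bounded. Iterating the covariance recursion from a deterministic $x(0)$ (so $V(0)=0$) gives $V(k)=\sum_{j=0}^{k-1}H^j(A\Sigma A)(H^\top)^j$, a sum of positive semidefinite terms, whence $\E(x(k)x(k)^\top)=V(k)+m(k)m(k)^\top\succeq V(k)$ in the positive semidefinite order. Letting $c>0$ be the smallest eigenvalue of $A\Sigma A$, for any $w$ we get $w^\top V(k)w\geq c\sum_{j=0}^{k-1}\|(H^\top)^j w\|^2$. If $H$ were not Schur stable, i.e.\ $\rho(H)=\rho(H^\top)\geq1$, I would choose $w$ in the real invariant subspace associated with an eigenvalue of modulus at least one so that $\|(H^\top)^j w\|$ does not tend to zero; the series then diverges and $w^\top\E(x(k)x(k)^\top)w\to\infty$, contradicting boundedness. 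Hence $\rho(H)<1$.

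The hard part will be the boundary case $\rho(H)=1$ in the necessity direction, where the homogeneous iteration neither grows nor decays: it is precisely the positive definiteness of $\Sigma$ (hence of the persistent forcing $A\Sigma A$) that rules out cancellation and forces the variance to accumulate without bound, which is why the hypothesis $0<A<I$ together with $\Sigma>0$ is essential rather than the weaker $0\leq A\leq I$ of the model. I would also state the divergence of $\sum_j\|(H^\top)^j w\|^2$ cleanly by passing to a real eigenvector when the critical eigenvalue is real and to a two-dimensional real invariant subspace when it is complex, rather than relying on $H$ being diagonalizable.
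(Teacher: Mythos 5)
Your proposal is correct and follows essentially the same route as the paper: the affine recursion for the mean with limit $(I-B+AW)^{-1}A(\mathds{1}-p)$, and the Stein-type recursion $V(k+1)=HV(k)H^\top+A\Sigma A$ for the centered second moment, with positive definiteness of $A\Sigma A$ driving the ``only if'' direction. The only difference is one of detail: where the paper simply asserts that $\Sigma>0$ makes the series $\sum_{\ell}H^\ell A\Sigma A^\top (H^\top)^\ell$ converge if and only if $\rho(H)<1$, you supply the missing justification (lower-bounding $w^\top V(k)w$ by $c\sum_j\|(H^\top)^jw\|^2$ and choosing $w$ in a real invariant subspace of a critical eigenvalue), which is a worthwhile tightening rather than a different approach.
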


\begin{proof}
For $k\geq 1,$ we write the dynamic of the expectation of $x(k)$ as 
\begin{align}
\E(x(k)) = & (B-AW)\E(x(k-1))+A\E(\lambda(k-1))\nonumber\\
= & (B-AW)\E(x(k-1))+A(\mathds{1}-p) \nonumber\\
= & (B-AW)^k x(0) \nonumber\\
  & + (1-p)\sum_{\ell=0}^{k-1} (B-AW)^\ell A\mathds{1}
\end{align}
where we have used the fact that $\E(\lambda(k))=\mathds{1}-p$ for all $k$. It can be verified that if $B-AW$ is Schur stable, we have 
\begin{align}
\lim_{k\rightarrow\infty}\E(x(k))= (I-B+AW)^{-1}A(\mathds{1}-p).
\end{align} 

Next, in order to show the mean square stability, we need to prove the stability of the expectation of the matrix series $\big(x(k)-\E(x(k))\big)\big(x(k)-\E(x(k))\big)^\top$. For the simplicity of the composition, we denote $x(k)-\E(x(k))$  as $S(k)$. Then we have
\begin{align}\label{e:MSS-proof}
 &\E(S(k)S^\top(k)) \nonumber\\
= & (B-AW)\E(S(k-1)S^\top(k-1))(B-AW)^\top \nonumber\\
& +  A \Sigma A^\top \nonumber\\
= & (B-AW)^k\E(S(0)S^\top(0))((B-AW)^k)^\top \nonumber\\
& + \sum_{\ell=0}^{k-1} (B-AW)^{\ell} A \Sigma A^\top ((B-AW)^\ell)^\top.
\end{align}
Notice that $\Sigma>0$, which implies that the summation in \eqref{e:MSS-proof} is converging to a finite matrix if and only if $\rho(B-AW)<1$. Hence  $\E(S(k)S^\top(k))$ converges to a finite matrix as $k\rightarrow\infty$ if and only if $\rho(B-AW)<1$. Hence the mean square stability of \eqref{e:our model} is proved.
\end{proof}

\begin{corollary}
If the system \eqref{e:our model} has Schur stable $B-AW$ and $p\in\spa\{\mathds{1}\}$, then the expectation of $x(k)$ converges to consensus. 
\end{corollary}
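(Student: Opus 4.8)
The plan is to reduce everything to the limit formula already furnished by Theorem~\ref{thm:main2}. Since $B-AW$ is assumed Schur stable, that theorem gives $\lim_{k\to\infty}\E(x(k))=(I-B+AW)^{-1}A(\one-p)$, and the matrix $I-B+AW=I-(B-AW)$ is invertible precisely because $\rho(B-AW)<1$ rules out $1$ as an eigenvalue of $B-AW$. It therefore suffices to show that, under the extra hypothesis $p\in\spa\{\one\}$, this limiting vector is a scalar multiple of $\one$, which is exactly what consensus means.

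First I would exploit the row-stochasticity of $B$ and $W$, which gives $B\one=\one$ and $W\one=\one$. A direct computation then shows that $\one$ is essentially a fixed direction of the relevant operator:
\begin{align*}
(I-B+AW)\one = \one-B\one+AW\one = \one-\one+A\one = A\one.
\end{align*}
Applying the (well-defined) inverse $(I-B+AW)^{-1}$ to both sides yields the key identity
\begin{align*}
(I-B+AW)^{-1}A\one = \one,
\end{align*}
so that $\one$ is a fixed point of the map $(I-B+AW)^{-1}A$.

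Finally, writing the hypothesis $p\in\spa\{\one\}$ as $p=c\one$ for some $c\in\R$ gives $\one-p=(1-c)\one$, and hence by linearity
\begin{align*}
\lim_{k\to\infty}\E(x(k)) = (1-c)(I-B+AW)^{-1}A\one = (1-c)\one,
\end{align*}
a vector with all entries equal, i.e.\ consensus. There is no genuine obstacle in this corollary; the only points needing care are to record that Schur stability of $B-AW$ is what makes $I-B+AW$ invertible, and to invoke the row-stochasticity of both $B$ and $W$ when establishing the fixed-point identity, so that the limit formula and the computation above are both legitimate.
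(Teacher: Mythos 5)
Your proof is correct and follows essentially the same route as the paper: invoke the limit formula from Theorem~\ref{thm:main2}, use row-stochasticity of $B$ and $W$ to get $(I-B+AW)\one = A\one$, and conclude the limit is $(1-c)\one$ when $p=c\one$. Your write-up is in fact slightly cleaner, since you explicitly note why $I-B+AW$ is invertible and state the fixed-point identity $(I-B+AW)^{-1}A\one=\one$ directly.
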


\begin{proof}
By Theorem \ref{thm:main2}, the expectation of $x(k)$ converges to $(I-B+AW)^{-1}A(\mathds{1}-p)$. Suppose $p=\alpha\mathds{1}$. We need to show that $(1-\alpha)(I-B+AW)^{-1}A\mathds{1}\in\spa\{\mathds{1}\}$. Indeed, we prove that $(1-\alpha)(I-B+AW)^{-1}A\mathds{1}=(1-\alpha)A\mathds{1}$ which can be seen by 
\begin{align}
(1-\alpha)A\mathds{1} & = (1-\alpha)(I-B+AW)\mathds{1}.
\end{align}
The previous equality holds for row stochastic matrix $B$ and $W$. Hence the expectation of $x(k)$ converge to $(1-\alpha)\mathds{1}$.
\end{proof}

In the previous result, one common assumption is the Schur stability of the matrix $B-AW$. For general row stochastic matrices $B$ and $W$ with $0<A<I$, it usually can not be guaranteed that the spectral radius of $B-AW$ is less than one. However for a special case, namely $B=W$, $\rho(B-AW)<1$ always holds. 

\begin{corollary}
When the stochastic matrices $B=W$ and the diagonal matrix $A$ satisfies $0<A<I$, then the $B-AW$ is Schur stable. 
\end{corollary}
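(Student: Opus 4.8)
The plan is to substitute $B=W$ directly and reduce the claim to a spectral-radius bound for a single structured matrix. With $B=W$ we have $B-AW = W-AW = (I-A)W$, so it suffices to prove $\rho((I-A)W)<1$. The governing idea is that left-multiplying $W$ by the diagonal matrix $I-A$ rescales each row $i$ of $W$ by the factor $1-a_i$, and since $0<A<I$ forces $0<a_i<1$ for every $i$, each such factor lies strictly inside $(0,1)$; this turns the row-stochastic matrix $W$ into a strictly sub-stochastic one.

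First I would record the two structural facts that drive the argument. Since $W$ is row-stochastic it has nonnegative entries and every row sums to $1$, and $I-A$ is diagonal with entries $1-a_i\in(0,1)$. Hence the $i$th row of $(I-A)W$ sums to $(1-a_i)\sum_j W_{ij}=1-a_i$. Because there are only finitely many rows and each row sum $1-a_i$ is strictly below $1$, the maximum row sum $\max_i(1-a_i)$ is itself strictly less than $1$.

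Next I would invoke the standard bound $\rho(M)\le\|M\|_1$ for the induced $1$-norm defined in Section~\ref{s:Preliminaries}. To exploit the row-sum structure within that (column-sum) norm, I would pass to the transpose via $\rho((I-A)W)=\rho(W^\top(I-A))$. The $j$th column of $W^\top(I-A)$ is the $j$th row of $W$ scaled by $1-a_j$, so its absolute column sum equals $(1-a_j)\sum_i W_{ji}=1-a_j$, giving $\|W^\top(I-A)\|_1=\max_j(1-a_j)<1$. Combining, $\rho((I-A)W)=\rho(W^\top(I-A))\le\|W^\top(I-A)\|_1<1$, which is exactly Schur stability of $B-AW$.

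I do not anticipate a genuine obstacle; the computation is short. The one point that deserves explicit attention is that the strict inequality $0<A$, rather than merely $0\le A$, is essential: if some $a_i$ vanished, the corresponding row sum would be $1$, the norm bound would degrade to $\rho\le 1$, and indeed $A=0$ gives $(I-A)W=W$ with $\rho=1$. The proof should therefore make clear that strict positivity of every diagonal entry of $A$ is precisely what upgrades the bound to a strict one.
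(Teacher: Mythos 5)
Your proof is correct and follows essentially the same route as the paper: both reduce to the observation that every row of $B-AW=(I-A)W$ is nonnegative with sum $1-a_i<1$. The only cosmetic difference is the concluding step --- the paper invokes the Gershgorin disc theorem on these row sums, while you bound the spectral radius by the induced $1$-norm of the transpose --- and for a nonnegative matrix both yield the identical bound $\rho(B-AW)\le\max_i(1-a_i)<1$.
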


\begin{proof}
Since $0<A<I$, the diagonal elements of $I-A$ belong to the open interval $(0,1)$. Then the sum of absolute value of the elements in each row of $B-AW$ is strictly less than one. Then by using Gershgorin disc theorem, we have the absolute value of all the eigenvalues of $B-AW$ are less than one. 
\end{proof}

\begin{remark}
If the model \eqref{e:our model} satisfy the $B=W$ and $0<A<I$, the expectation of $x(k)$ evolve according to Firedkin-Johnsen model \eqref{e:FJ} which is stable in this case, see Theorem~1 in \cite{Tempo2015}. 
\end{remark}

\begin{remark}\label{re:non-converging}
In the previous results, the Schur stability of $B-AW$ can guarantee the stability of the system \eqref{e:our model}. Generally speaking, the weakened conditions with $\rho(B-AW)\leq 1$ and $0\leq A\leq I$ can not guarantee that the expectation of $x(k)$ converges to a constant vector. A simulation for this case is given in Example \ref{ex: non-converging} in Section \ref{s:simulation}. 
\end{remark}

\subsection{Markovian $\lambda(k)$}\label{ss:markov}

In this subsection, we consider the case that the vector $\lambda(k)$, which takes value in the set $\mathbb{N}:=\{0,1\}^n\subset\R^n$, is a Markov chain with an initial distribution $\pi(0)\in\R^{2^n}$, i.e., $\lambda(0)=i$ with probability $\pi_{i}(0)$. Then for any $i\in\N$ the indicator function $\mathds{1}_{\{\lambda(k)=i\}}(w)=1$ if $\lambda(k)(w)=i$, and $0$ otherwise. The transition matrix for the Markov chain of $\lambda$ is $P$. Furthermore, we can rewrite the system \eqref{e:our model} as 
\begin{equation}\label{e:our_model_new}
x(k+1) = (B-AW) x(k) + D\lambda(k)a
\end{equation}
where $D\lambda(k)=\diag(\lambda(k))$ and the vector $a$ such that $A=\diag(a)$.



To simply the presentation, we introduce the following notations
\begin{align}\label{e:qiQi}
q_i(k) & = \E\big( x(k)\mathds{1}_{\{\lambda(k)=i\}} \big) \\
Q_i(k) & = \E\big( x(k)x^\top(k)\mathds{1}_{\{\lambda(k)=i\}} \big), 
\end{align}
and collect $q(k):=[\ldots,q^\top_i(k),\ldots]$ and $Q(k):=[\ldots,Q^\top_i(k),\ldots]$ for all $i\in\N$.
Then 
\begin{align}
\mu(k):=& \E(x(k)) = \sum_{i\in\N}q_i(k) \\
\Q(k):=& \E(x(k)x^\top(k)) = \sum_{i\in\N}Q_i(k)
\end{align}

\begin{lemma}
Suppose $B-AW$ is Schur stable and the initial distribution of $\lambda(0)$ is $\pi=\pi^*$ which is invariant for $P$, then the expectation $\mu(k)$ is converging to 
\begin{align}\label{e:markov_limit}
(I-B+AW)^{-1}\sum_{i\in\N} \pi_i \diag(i)a.
\end{align}
\end{lemma}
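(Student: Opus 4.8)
The plan is to derive a linear recursion for the partial expectations $q_i(k)$ from \eqref{e:qiQi} and then sum over $i\in\N$ to collapse it into an affine recursion for $\mu(k)$ whose unique fixed point is \eqref{e:markov_limit}. I write $\mathcal{F}_k$ for the $\sigma$-algebra generated by $\lambda(0),\ldots,\lambda(k)$; since \eqref{e:our_model_new} builds $x(k)$ from $x(0)$ and $\lambda(0),\ldots,\lambda(k-1)$, the state $x(k)$ is $\mathcal{F}_{k-1}$-measurable, hence $\mathcal{F}_k$-measurable.

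First I would substitute \eqref{e:our_model_new} into $q_i(k+1)=\E(x(k+1)\mathds{1}_{\{\lambda(k+1)=i\}})$ to obtain
\[
q_i(k+1) = (B-AW)\,\E\big(x(k)\mathds{1}_{\{\lambda(k+1)=i\}}\big) + \E\big(\diag(\lambda(k))a\,\mathds{1}_{\{\lambda(k+1)=i\}}\big).
\]
For the first expectation I would condition on $\mathcal{F}_k$: because $x(k)$ is $\mathcal{F}_k$-measurable it pulls out of $\E(\cdot\mid\mathcal{F}_k)$, and the Markov property gives $\E(\mathds{1}_{\{\lambda(k+1)=i\}}\mid\mathcal{F}_k)=P_{\lambda(k),i}$, which equals $P_{ji}$ on the event $\{\lambda(k)=j\}$. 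Partitioning by the value of $\lambda(k)$ then yields $\E(x(k)\mathds{1}_{\{\lambda(k+1)=i\}})=\sum_{j\in\N}P_{ji}\,q_j(k)$. The second expectation is treated the same way, noting that on $\{\lambda(k)=j\}$ the factor $\diag(\lambda(k))a$ is the constant vector $\diag(j)a$ and that $\E(\mathds{1}_{\{\lambda(k)=j\}})=\pi_j$ by the assumed invariance of $\pi$; it therefore equals $\sum_{j\in\N}P_{ji}\,\pi_j\,\diag(j)a$. Collecting both contributions gives
\[
q_i(k+1) = (B-AW)\sum_{j\in\N}P_{ji}\,q_j(k) + \sum_{j\in\N}P_{ji}\,\pi_j\,\diag(j)a.
\]

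Next I would sum over $i\in\N$ and use that $P$ is row stochastic, so $\sum_{i\in\N}P_{ji}=1$ for every $j$; both double sums then telescope, the first to $\sum_{j\in\N}q_j(k)=\mu(k)$ and the second to $\sum_{j\in\N}\pi_j\diag(j)a$. This produces the affine recursion
\[
\mu(k+1) = (B-AW)\,\mu(k) + \sum_{j\in\N}\pi_j\,\diag(j)a.
\]
Since $B-AW$ is Schur stable, $I-B+AW$ is invertible and this iteration converges, from any initial value, to its unique fixed point $(I-B+AW)^{-1}\sum_{j\in\N}\pi_j\diag(j)a$, which is precisely \eqref{e:markov_limit}.

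I expect the main obstacle to be the clean justification of the Markov step used to evaluate $\E(x(k)\mathds{1}_{\{\lambda(k+1)=i\}})$: one must argue that $x(k)$ is $\mathcal{F}_k$-measurable so that it commutes with $\E(\cdot\mid\mathcal{F}_k)$, and that the one-step transition of $\lambda$ is governed by $P$ irrespective of the earlier history. The hypothesis $\pi=\pi^*$ is exactly what keeps the forcing term time-invariant; without it the second sum would carry the time-dependent marginal $\pi(k)$ of $\lambda(k)$, and convergence of $\mu(k)$ would additionally require $\pi(k)\to\pi^*$. The remaining manipulations—interchanging the finite sums with expectation and the cancellation from row-stochasticity of $P$—are routine.
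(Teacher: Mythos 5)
Your proof is correct and ultimately the same as the paper's: both reduce the claim to the affine recursion $\mu(k+1)=(B-AW)\mu(k)+\sum_{i\in\N}\pi_i\diag(i)a$ (valid because invariance of $\pi^*$ keeps the marginal of $\lambda(k)$ equal to $\pi^*$ for every $k$) and conclude by Schur stability of $B-AW$, which makes the iteration converge to the fixed point $(I-B+AW)^{-1}\sum_{i\in\N}\pi_i\diag(i)a$. The only difference is cosmetic: you derive that recursion by first writing one for the partial expectations $q_i(k)$ and summing over $i$ using row-stochasticity of $P$, whereas the paper takes expectations of \eqref{e:our_model_new} directly and states the unrolled geometric-series form.
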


\begin{proof}
The dynamic of the expectation of $x(k)$ is given as 
\begin{align}
\mu(k)  = & (B-AW)^k \E(x(0)) \\
&+\sum_{\ell=0}^{k-1} \big((B-AW)^\ell \sum_{i\in\N} \pi_i \diag(i)a \big).
\end{align}

Then, if $B-AW$ is Schur stable, we have 
\begin{align}
\lim_{k\rightarrow \infty} \mu(k)=(I-B+AW)^{-1}\sum_{i\in\N} \pi_i \diag(i)a.
\end{align}
\end{proof}

Notice that by denoting 
\begin{align}
S:= \begin{bmatrix}
\vdots \\ \diag(i)a \\ \vdots
\end{bmatrix}, i\in\N,
\end{align}
we can rewrite $\sum_{i\in\N} \pi_i \diag(i)a= (\pi\otimes I_{n})S$.

\begin{theorem}
Suppose $P$ is irreducible and aperiodic. Then the sequence $x(k)$ is mean square stable if and only if $B-AW$ is Schur stable. Moreover, for any initial distribution $\pi(0)$, $\lim_{k\rightarrow \infty} \E{x(k)} $ converges to \eqref{e:markov_limit} asymptotically.
\end{theorem}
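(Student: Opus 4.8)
The plan is to treat \eqref{e:our_model_new} as a Markov jump linear system whose state matrix $F:=B-AW$ is the same in every mode, only the affine driving term $\diag(\lambda(k))a$ depending on the mode $i\in\N$. Writing $b_i:=\diag(i)a$, I would first condition on the current mode and exploit the Markov property of $\lambda$ (so that $\lambda(k+1)$ is independent of $x(k)$ given $\lambda(k)$) to derive coupled recursions for the partial moments in \eqref{e:qiQi}, namely $q_j(k+1)=\sum_{i\in\N}P_{ij}\big(Fq_i(k)+b_i\pi_i(k)\big)$ and, analogously, $Q_j(k+1)=\sum_{i\in\N}P_{ij}\big(FQ_i(k)F^\top+Fq_i(k)b_i^\top+b_iq_i(k)^\top F^\top+\pi_i(k)b_ib_i^\top\big)$, where $\pi_i(k)=\Pr(\lambda(k)=i)$. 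Because $F$ is mode-independent and $P$ is row-stochastic ($\sum_jP_{ij}=1$), summing over $j$ collapses these into closed recursions for the aggregated moments: $\mu(k+1)=F\mu(k)+\sum_{i\in\N}\pi_i(k)b_i$ and $\Q(k+1)=F\Q(k)F^\top+G(k)$, with $G(k)=F\sum_iq_i(k)b_i^\top+\big(\sum_ib_iq_i(k)^\top\big)F^\top+\sum_i\pi_i(k)b_ib_i^\top$. This is the step where mode-independence pays off: both the aggregate mean and second moment satisfy a constant-matrix linear recursion driven by an asymptotically constant input.

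Next I would handle the inputs. Since $P$ is irreducible and aperiodic on the finite state space $\N$, the fundamental limit theorem for Markov chains (as in \cite{norris1998markov}) gives $\pi(k)\to\pi^*$ for every initial distribution $\pi(0)$, where $\pi^*$ is the unique stationary distribution; hence $\sum_i\pi_i(k)b_i\to\sum_i\pi_i^*\diag(i)a$. The stacked vector $[\,q_i(k)^\top\,]_{i\in\N}$ obeys a linear recursion with operator $P^\top\otimes F$, whose spectral radius is $\rho(P)\rho(F)=\rho(F)$, so when $B-AW$ is Schur stable every $q_i(k)$ converges to a finite limit $q_i^*$ and consequently $G(k)\to G^*$. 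For the mean this gives $\mu(k)=F^k\mu(0)+\sum_{\ell=0}^{k-1}F^\ell\big(\sum_i\pi_i(k-1-\ell)b_i\big)$, and since $\rho(F)<1$ makes $\sum_\ell\|F^\ell\|<\infty$, a convergent-input argument yields $\mu(k)\to(I-F)^{-1}\sum_i\pi_i^*\diag(i)a$, which is exactly \eqref{e:markov_limit}; this extends the previous lemma from $\pi(0)=\pi^*$ to arbitrary $\pi(0)$.

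For the second moment the homogeneous part of the $\Q$-recursion is the Lyapunov map $M\mapsto FMF^\top$, i.e. $\vect(\cdot)$ turns it into $F\otimes F$ with spectral radius $\rho(F)^2$. If $B-AW$ is Schur stable then $\rho(F)^2<1$ and, with $G(k)\to G^*$, the solution $\Q(k)=F^k\Q(0)(F^\top)^k+\sum_{\ell=0}^{k-1}F^\ell G(k-1-\ell)(F^\top)^\ell$ converges to the unique solution of $\Q^*=F\Q^*F^\top+G^*$; together with the mean this establishes mean square stability. Conversely, mean square stability forces $\Q(k)$ to converge for every deterministic $x(0)=x_0$, for which $\Q(0)=x_0x_0^\top$; since $G(k)$ depends on $x_0$ only linearly while the homogeneous term $F^kx_0x_0^\top(F^\top)^k$ is quadratic in $x_0$, probing with $x_0$ scaled along a dominant eigendirection of $F$ shows that convergence for all $x_0$ requires $\rho(F)^2<1$, i.e. $B-AW$ Schur stable.

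I expect the main obstacle to be the bookkeeping of the cross terms in $G(k)$ together with the justification that these asymptotically autonomous recursions inherit the limits of their inputs, since $\Q$ does not close on $\mu$ alone but genuinely needs the individual limits $q_i^*$; the ``only if'' direction also needs the scaling argument above so that the affine driving term cannot cancel a non-decaying homogeneous mode. An alternative, if one prefers to avoid this computation, is to invoke the mean-square-stability criterion for Markov jump linear systems from \cite{Costa2005}, whose augmented operator here is $P^\top\otimes(F\otimes F)$ with spectral radius $\rho(F)^2$, giving the same equivalence at once.
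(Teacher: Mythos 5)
Your proof is correct, and it reaches the mean--square--stability equivalence by a genuinely different route than the paper. The paper keeps the mode-indexed second moments $Q_i(k)$ coupled, packages their recursion into the operator $\calT$ with $\vect(\calT(V))$ driven by $P^\top\otimes(M\otimes M)$, and then invokes the general MSS criterion for Markov jump linear systems (Propositions 3.37--3.38 of \cite{Costa2005}) to conclude that MSS is equivalent to $\rho(\calT)<1$, i.e.\ to $M=B-AW$ being Schur. You instead exploit the fact that the state matrix $F=B-AW$ is the same in every mode: summing the coupled recursions over $j$ and using row-stochasticity of $P$ closes the dynamics at the level of the aggregate moments, giving $\mu(k+1)=F\mu(k)+\sum_i\pi_i(k)b_i$ and $\Q(k+1)=F\Q(k)F^\top+G(k)$ with asymptotically constant inputs (via ergodicity of $P$ and convergence of the stacked $q$-recursion under $P^\top\otimes F$). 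Sufficiency then follows from elementary facts about Schur-stable linear recursions with convergent inputs, and your treatment of the mean for arbitrary $\pi(0)$ coincides with the paper's telescoping argument. What your route buys is a self-contained proof that avoids the MJLS machinery (which, as you note, would recover the same augmented operator $P^\top\otimes(F\otimes F)$); the price is the extra bookkeeping of the limits $q_i^*$ and $G^*$, which you correctly flag. Two small remarks: for the ``only if'' direction the quadratic scaling argument on $\Q(k)$ is not needed --- the first-moment requirement alone forces $F^k(x_0-x_0')\to 0$ for every pair of deterministic initial conditions, since the driving term of the mean recursion does not depend on $x(0)$ and the limit $\mu$ must be initial-condition independent, hence $\rho(F)<1$; and your identification of the augmented operator's spectral radius as $\rho(F)^2$ is in fact more accurate than the paper's stated $\rho(P^\top)\rho(M)=\rho(M)$, though the discrepancy is harmless because $\rho(M)^2<1$ if and only if $\rho(M)<1$.
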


\begin{proof}

If $P$ is irreducible and aperiodic, and moreover there are finite states in the Markov chain of $\lambda(k)$, then $P$ has unique stationary distribution, denoted as $\pi^*$.

To simplify the presentation, denote $M=B-AW$. We first write the recursive equation for $q_i(k)$ and $Q_i(k)$ which are defined in \eqref{e:qiQi}. 
For \eqref{e:our_model_new}, 
\begin{align}
q_j(k+1) & = \E\big( x(k+1)\mathds{1}_{\{\lambda(k+1)=j\}} \big) \\
& = \sum_{i\in\N}\E\big( (Mx(k)+D\lambda(k)a) \nonumber\\
&\quad  \quad  \quad  \mathds{1}_{\{\lambda(k+1)=j\}}\mathds{1}_{\{\lambda(k)=i\}} \big) \\
& = \sum_{i\in\N} p_{ij} M q_i(k) + \sum_{i\in\N}p_{ij}\diag(i)a \pi_i(k),
\end{align}
and
\begin{align}
& Q_j(k+1) \nonumber \\
= & \E\big( x(k+1)x^\top(k+1)\mathds{1}_{\{\lambda(k+1)=j\}} \big) \\
= & \E\big( (Mx(k)+D\lambda(k)a)(Mx(k)+D\lambda(k)a)^\top \nonumber\\
 & \quad \quad \mathds{1}_{\{\lambda(k+1)=j\}}\mathds{1}_{\{\lambda(k)=i\}} \big)\\
= & \sum_{i\in\N} p_{ij}MQ_i(k)M^\top \nonumber \\
 & + \sum_{i\in\N} p_{ij}Mq_i(k)a^\top \diag(i) \nonumber \\
 & + \sum_{i\in\N} p_{ij}\diag(i)a q^\top_i(k)M^\top \nonumber\\
 & + \sum_{i\in\N} p_{ij} \diag(i)aa^\top \diag(i)\pi_i(k).
\end{align}
For any $V=(V_1,\ldots,V_N)$ where $V_i\in\R^{n\times n}$, we define the following operators
\begin{align}
\calT_j(V):= &  \sum_{i=1}^N p_{ij}MV_iM^\top 
\end{align}   
and $\calT : = (\calT_1,\ldots,\calT_N)$. Notice that we can write $\calT(V)$ as
\begin{align}
\vect(\calT(V)) = (P^\top\otimes I_N)\otimes (M\otimes M)\vect(V).
\end{align} 
Furthermore, we denote 
\begin{align}
R_j(q(k)):= & \sum_{i\in\N} p_{ij}\big(Mq_i(k)a^\top \diag(i) \nonumber \\
 &  +\diag(i)a q^\top_i(k)M^\top \nonumber\\
 &  +\diag(i)aa^\top \diag(i)\pi_i(k)\big) 
\end{align}
and $R(k,q):=(\ldots,R_i(k,q),\ldots)$ for all $i\in\N$, and 
\begin{align}
\calB(q(k)) & = (P^\top\otimes I_{2^n})\diag(M,\ldots,M)q(k) \\
& = (P^\top\otimes M) q(k) \\
\psi(k) & = \begin{bmatrix}
\sum_{i\in\N}p_{i1}\diag(i)a \pi_i(k) \\
\vdots \\
\sum_{i\in\N}p_{i,2^n}\diag(i)a \pi_i(k)
\end{bmatrix}.
\end{align}
Then the recursive equation of $q(k)$ and $Q(k)$ is given as 
\begin{align}
q(k+1) & = \calB(q(k))+\psi(k)\\
Q(k+1) & = \calT(Q(k))+R(q(k)).
\end{align}

Next we prove that $\rho(\calT)<1$ if and only if $\rho(M)<1$. Indeed, by Lemma 1 in \cite{Kubrusly1985}, we have that $\rho(\calT)<1$ if and only if 
\begin{align}
\lim_{k\rightarrow\infty}\|\calT^k(V)\|_1 = 0, \forall V=(V_1,\ldots,V_N) \textnormal{ with } V_i>0
\end{align}
which is equivalent to $\lim_{k\rightarrow\infty} \vect(\calT^k(V)) = 0$. By the fact that 
\begin{align}
\vect(\calT^k(V)) = \Big((P^\top\otimes I_N)\otimes (M\otimes M)\Big)^k\vect(V),
\end{align}
we have $\lim_{k\rightarrow\infty} \vect(\calT^k(V)) = 0$ for any $V$ is equivalent to
\begin{align}
&\rho((P^\top\otimes I_N)\otimes (M\otimes M)) \\
= & \rho(P^\top)\rho(M) \\
= & \rho(M) \\
< & 1. 
\end{align}
Furthermore, notice that if $\rho(M)< 1$, then $\rho(\calB)<1$. Hence, by Proposition 3.37 and 3.38 in \cite{Costa2005}, which shows system \eqref{e:our model} is mean square stable if and only if $\rho(\calT)<1$, we have the mean square stability is equivalent to $\rho(M)<1$. 

The remaining task is to show that the mean converges to \eqref{e:markov_limit}. 
Since we have $\lim_{k\rightarrow\infty}\sum_{\ell=0}^{k}(M)^\ell= (I-M)^{-1}$, then in order to prove $\mu(k)$ converge to \eqref{e:markov_limit}, it is equivalent to show that 
\begin{equation}
\lim_{k\rightarrow\infty} \mu(k)- \big(\sum_{\ell=0}^{k-1}(M)^\ell \big) (\pi^*\otimes I_{n})S=0.
\end{equation}
Indeed, 
\begin{align}
y(k):=& \mu(k)- \big(\sum_{\ell=0}^{k-1}(M)^\ell \big) (\pi^*\otimes I_{n})S \\
= & (M)^{k} \mu(0) \\
& + \sum_{\ell=0}^{k-1} (M)^{k-\ell} ((\pi(0)P^\ell)\otimes I_{n})S \\
& - \big(\sum_{\ell=0}^{k}(M)^\ell \big)(\pi^*\otimes I_{n})S \\
= & (M)^{k} \mu(0) \\
& + \big( \sum_{\ell=0}^{k-1} (M)^{k-\ell} \big) (((\pi(0)-\pi^*)P^\ell)\otimes I_{n})S.
\end{align} 
Then it can shown in a straightforward manner that $\|y(k)\|\rightarrow 0$ as $k\rightarrow \infty$, since $M$ is stable and $(\pi(0)-\pi^*)P^\ell\rightarrow 0$ as $\ell\rightarrow \infty$.
\end{proof}

\section{Numerical study}\label{s:simulation}

\begin{example}
In this example, we demonstrate the mean square stability of system \eqref{e:our model}. Here we consider the system \eqref{e:our model} with the matrices are set to be
\begin{align}\label{ex:mat_setting}
B&=\begin{bmatrix}
    0.2931 &   0.0660  &  0.0948  &  0.3384  &  0.2076 \\
    0.3670 &   0.3348  &  0.1038  &  0.1203  &  0.0741 \\
    0.0859 &   0.3849  &  0.0686  &  0.0489  &  0.4117 \\
    0.3084 &   0.1009  &  0.1948  &  0.3477  &  0.0482 \\
    0.3110 &   0.1164  &  0.1234  &  0.2924  &  0.1569 
\end{bmatrix}, \nonumber\\
W & = \begin{bmatrix}
    0.2846 &   0.2561  &  0.2067 &   0.0504  &  0.2021\\
    0.2423 &   0.3299  &  0.0407 &   0.0753  &  0.3118\\
    0.0955 &   0.3605  &  0.4363 &   0.0398  &  0.0680\\
    0.3572 &   0.3283  &  0.0169 &   0.2435  &  0.0541\\
    0.2922 &   0.2459  &  0.0044 &   0.1127  &  0.3449
\end{bmatrix}
\end{align}
and the learning rate $A=0.1I_5$. Let the random variable $\lambda(k)$ be i.i.d. with uncorrelated components, and set the probability of $\lambda_i(k)=0$ to 0.5 for all $i\in\calI$ and $k\geq 0$. Fig.~\ref{fig:1_onetraj} depicts one result for the system \eqref{e:our model}. It can be seen that since the randomness is added to every iteration of the system, the trajectories are oscillating. However, the expectation of the state $x(k)$, given as in Fig~\ref{fig:1_exp}, converge to $0.5\mathds{1}$ which also follows from Theorem \ref{thm:main2}.

\begin{figure}
\centering
\includegraphics[width=0.48\textwidth]{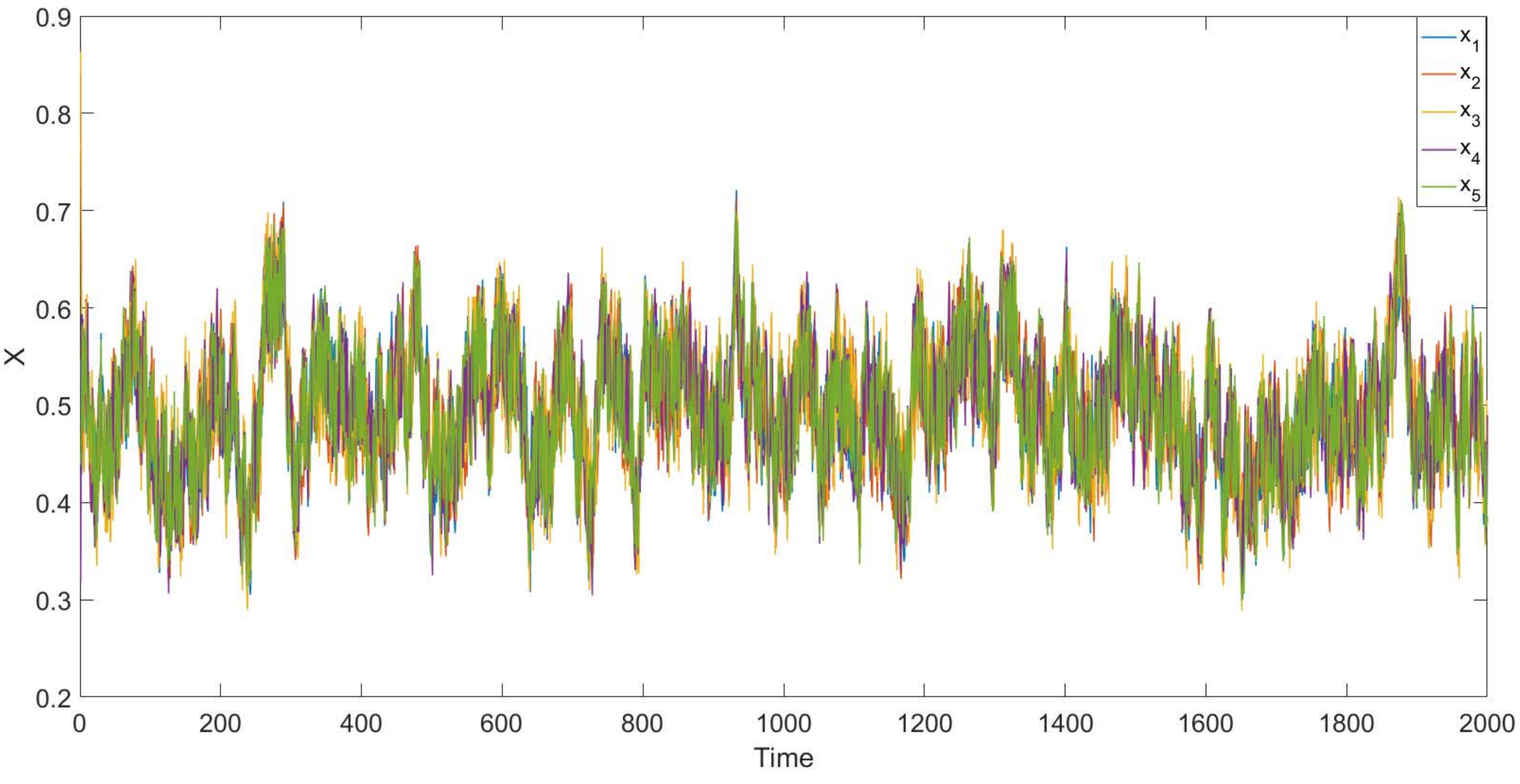}
\caption{The trajectories of the system \eqref{e:our model} with i.i.d. $ \lambda(k)$ and matrices as in \eqref{ex:mat_setting}.}\label{fig:1_onetraj}
\end{figure}

\begin{figure}
\centering
\includegraphics[width=0.48\textwidth]{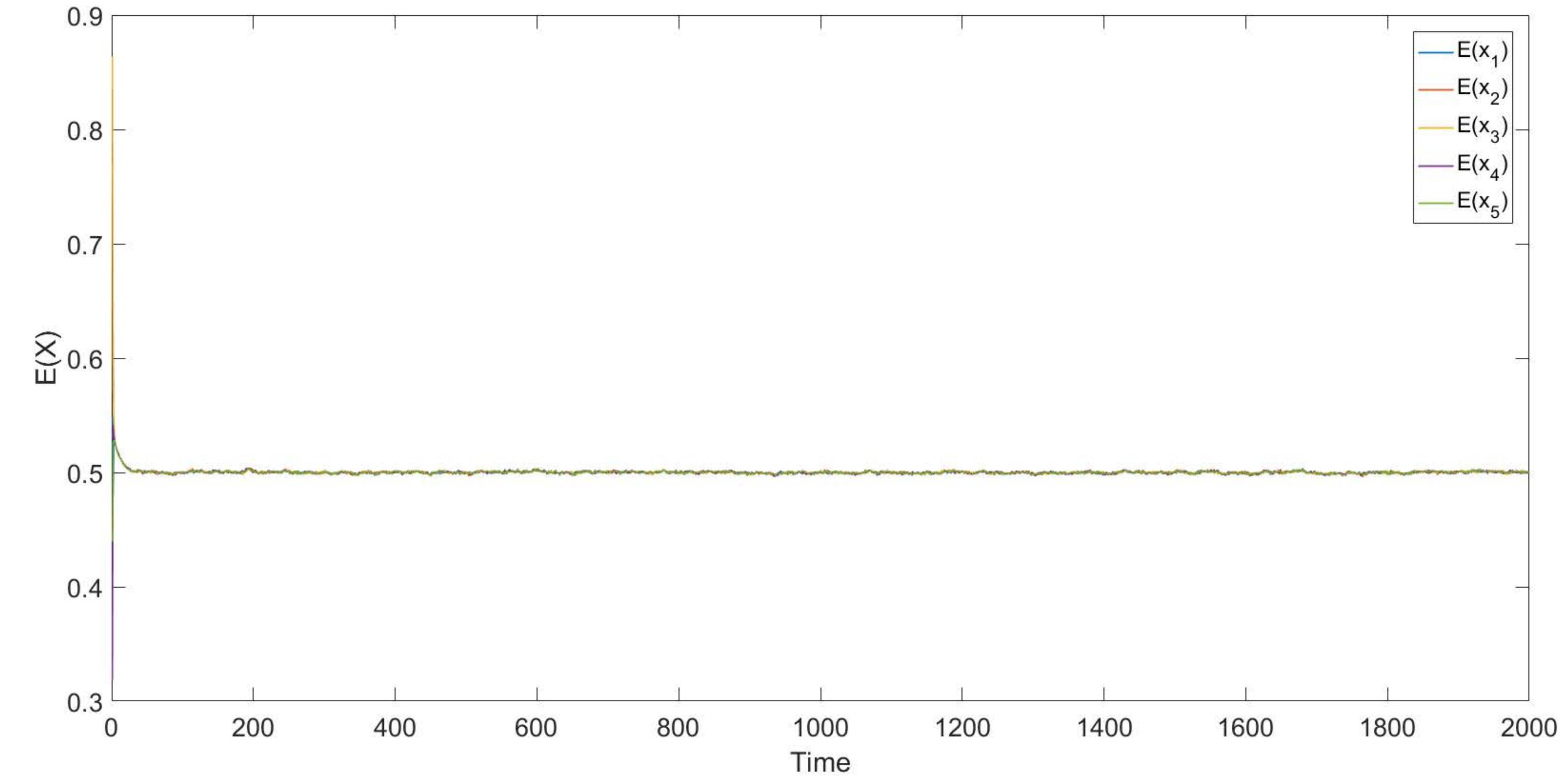}
\caption{The evolution of the expectation of the states of system \eqref{e:our model} with i.i.d. $ \lambda(k)$ and matrices as in \eqref{ex:mat_setting}.}\label{fig:1_exp}
\end{figure}
\end{example}

\begin{example}\label{ex: non-converging}
In this example, we provide one scenario which verifies the discussion in Remark \ref{re:non-converging}. Here, we consider the system \eqref{e:our model} with 
\begin{align}
B&=\begin{bmatrix}
    0.1612  &  0.1406  &  0.1096  &  0.2779  &  0.3108 \\
    0.0075  &  0.6307  &  0.1664  &  0.1910  &  0.0044  \\
    0.0219  &  0.3147  &  0.3403  &  0.2353  &  0.0878 \\
         0  &       0  &       0  &       0  &  1 \\
         0  &       0  &       0  &  1       &  0
\end{bmatrix}
\end{align}
and $W=B$, $A = \diag(0.5,0.5,0.5,0,0)^\top$. In this case we have $\rho(B-AW)=1$ which violate the assumption in Theorem \ref{thm:main2}.  Fig.~\ref{fig:2_onetraj} depicts one trajectory for this system and Fig.~\ref{fig:2_exp} shows the evolution of expectation of $x(k)$.  It can be seen that the expectation will not converge to a constant vector in this case.

\begin{figure}
\centering
\includegraphics[width=0.48\textwidth]{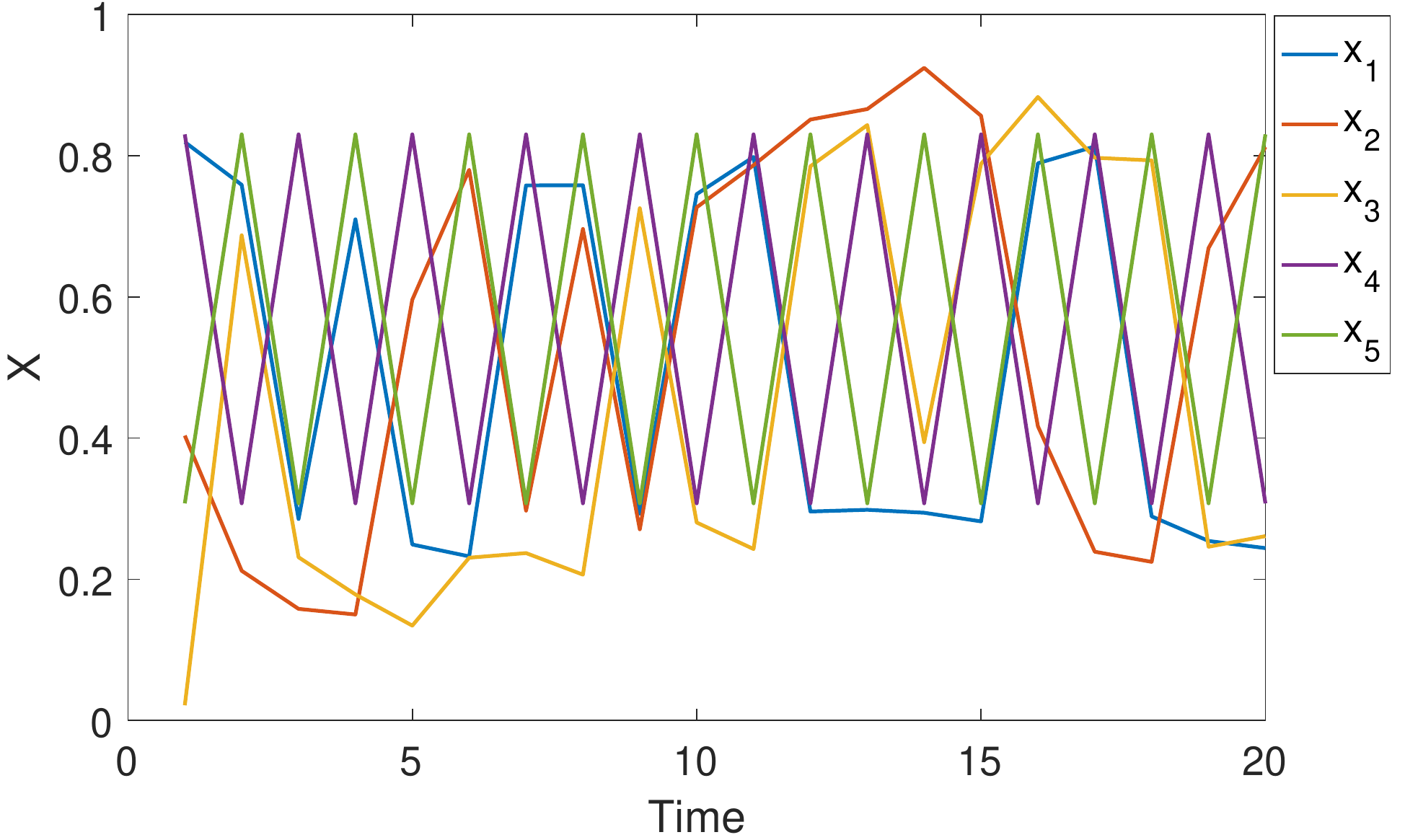}
\caption{The trajectories of the system \eqref{e:our model} with i.i.d. $ \lambda(k)$ and matrices as in \eqref{ex:mat_setting}.}\label{fig:2_onetraj}
\end{figure}

\begin{figure}
\centering
\includegraphics[width=0.48\textwidth]{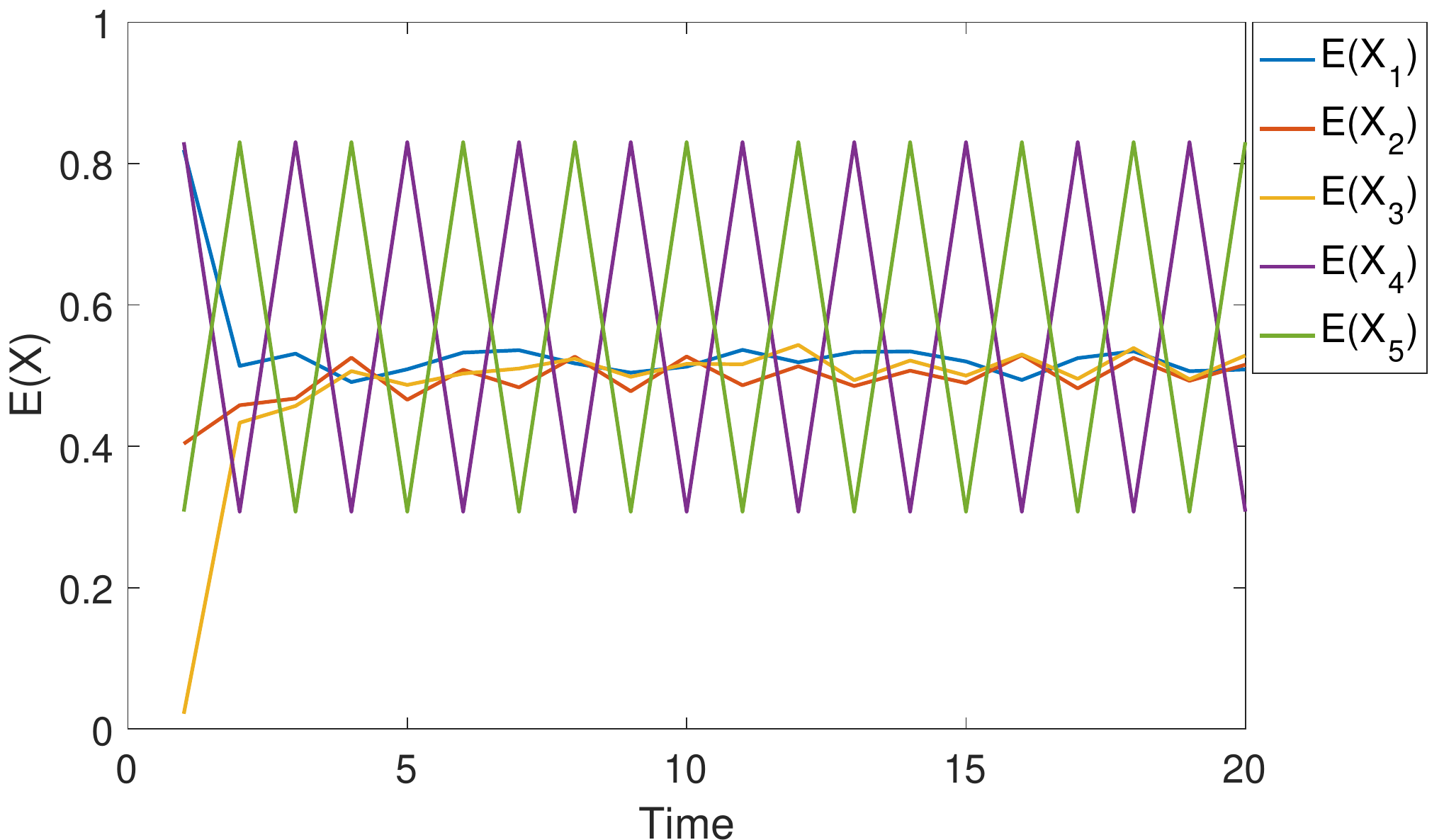}
\caption{The evolution of the expectation of the states of system \eqref{e:our model} with i.i.d. $ \lambda(k)$ and matrices as in \eqref{ex:mat_setting}.}\label{fig:2_exp}
\end{figure}
\end{example}

\begin{example}
In this example, we consider the system \eqref{e:our model} with the same matrices as in \eqref{ex:mat_setting}, but the vector $\lambda(k)$ is a Markov chain as in subsection \ref{ss:markov}. Here we consider the case that all the components of $\lambda$ are independent, and for each component the distribution evolve according to the same transition matrix. It can be seen that for this type of random vector $\lambda(k)$, the expectation of $\lambda(k)$ converge to $\spa\{\mathds{1}\}$.  Moreover, the term $\sum_{i\in\N} \pi_i \diag(i)a$ in \eqref{e:markov_limit} belongs to $\spa\{\mathds{1} \}$. Then, as can be seen from Fig.~\ref{fig:3_exp}, the expectation of $x(k)$ converge to consensus.  

\begin{figure}
\centering
\includegraphics[width=0.48\textwidth]{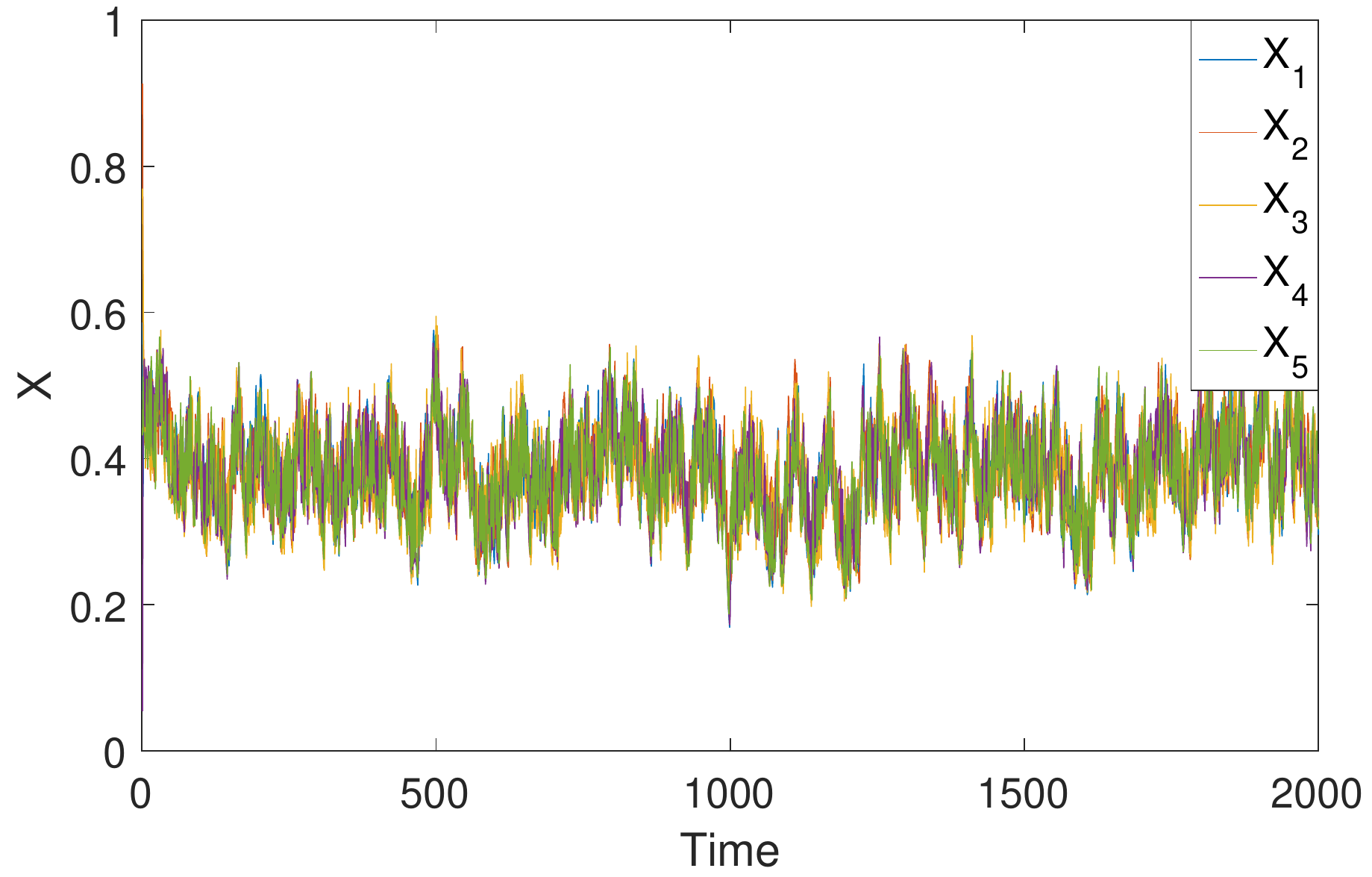}
\caption{The trajectories of the system \eqref{e:our model} with Markovian $ \lambda(k)$ and matrices as in \eqref{ex:mat_setting}.}\label{fig:3_onetraj}
\end{figure}

\begin{figure}
\centering
\includegraphics[width=0.48\textwidth]{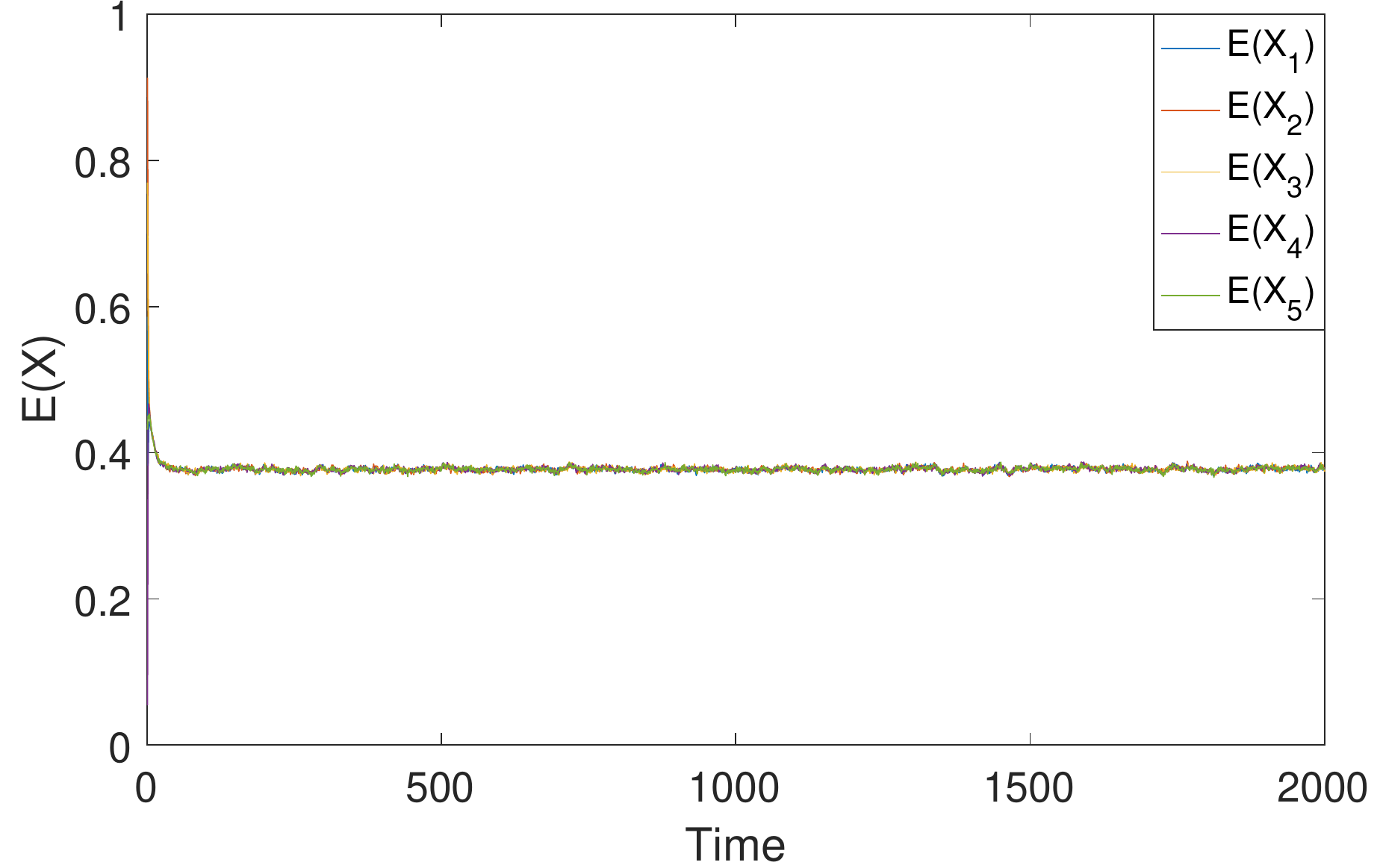}
\caption{The evolution of the expectation of the states of system \eqref{e:our model} with Markovian $ \lambda(k)$ and matrices as in \eqref{ex:mat_setting}.}\label{fig:3_exp}
\end{figure}
\end{example}

\section{Conclusion}\label{s:conclusion}

In this paper, we studied the modeling of the human behavior in social network along the path proposed by the author in \cite{epstein2014agent_zero}. We have focused on the irrational component of human cognitive process and proposed one general model. This model contains the well-known Rescorla-Wagner and Friedkin-Johnsen model as special cases. The sufficient and necessary condition is provided for the mean square stability of our system. For the future directions, we will mainly focus on how does the incorporation of our model into the human cognitive process will affect the human behavior for large networks.



\bibliographystyle{plain} 
\bibliography{ref}

\end{document}